

\documentclass[authoryear,preprint,review,12pt]{elsarticle}



\usepackage{epsfig}
\usepackage{graphicx}
\usepackage{amsmath}

\usepackage{amssymb}
\usepackage{amsthm}

\newtheorem{thm}{Theorem}
\newtheorem{lem}[thm]{Lemma}
\newtheorem{cor}[thm]{Corollary}
\newtheorem{rmk}[thm]{Remark}
\newproof{pf}{Proof}
\newproof{pot}{Proof of Theorem \ref{thm2}}


\journal{Computational and Applied Mathematics}

\begin{document}

\begin{frontmatter}



\title{Asymptotic Expansions for High-Contrast Linear Elasticity}


\author[unal]{Leonardo A. Poveda\corref{cor2}}
\ead{leapovedacu@unal.edu.co}

\author[unal]{Sebastian Huepo}
\ead{shuepobe@unal.edu.co}

\author[kaust]{Victor M. Calo}
\ead{victor.calo@kaust.edu.sa}

\author[unal]{Juan Galvis}
\ead{jcgalvisa@unal.edu.co}

\address[unal]{Departamento de Matem\'aticas, Universidad Nacional de Colombia, Bogot\'a, Colombia}

\address[kaust]{Center for Numerical Porous Media, Applied Mathematics \& Computational Science and Earth Sciences \& Engineering, King Abdullah University of Science and Technology\\ Thuwal 23955-6900, Kingdom of Saudi Arabia}

\cortext[cor2]{Corresponding author at: Departamento de Matem\'aticas, Universidad Nacional de Colombia, Bogot\'a, DC}


\begin{abstract}
We study linear elasticity problems with high contrast in the coefficients using asymptotic limits recently introduced. We derive an asymptotic expansion to solve heterogeneous elasticity problems in terms of the contrast in the coefficients. We study the convergence of the expansion in the $H^1$ norm.
\end{abstract}

\begin{keyword} Linear elasticity problem \sep high-contrast coefficients  \sep asymptotic expansions \sep highly inelastic inclusion \sep convergence  


\end{keyword}

\end{frontmatter}


\section{Introduction}
\label{Sec1:Introduction}

There is a growing interest in the computation of solutions of problems governed by partial differential equations with high-contrast coefficients. Solutions to these model problems are multiscale in nature. The solutions to these problems are often approximated using the Finite Element Method (FEM), Multiscale Finite Element Method (MsFEM) or alternative forms of these, (cf., \citet{yang1997least,wihler2004locking,MR2477579,gatica2009augmented,di2013locking,xia2014mib}. and references therein). 

Herein, we study linear elasticity problems in heterogeneous media. Our goal is to devise approximate solutions that account for the high contrast in the coefficients. We focus on the dependence of the contrast in the coefficients where the contrast is referred to as the ratio of the jumps in the physical properties. We follow 
the analysis  presented in \citet{calo2014asymptotic} that consists of  deriving an asymptotic expansion for the solution of the elliptic differential equation in heterogeneous media. Thus, we derive asymptotic expansions to solve linear elasticity problems with high contrast.

The linear elasticity equations model the equilibrium and the local strain of deformable bodies; see  \citet{MR936420,MR1477663,MR0010851,MR0075755,
kang1996mathematical}.
 The constitutive laws relating stresses and strains  depend on the material and the process modeled. For composite materials, physical properties such as the Young's modulus can vary several orders of magnitude and we seek to understand the effects of these variations on the solution. In this setting, the asymptotic expansions that express the solution are useful tools to understand the effects of the high contrast and the interactions between different materials.

We consider the equilibrium equations for a linear elastic material in a smooth domain $D\subset \mathbb{R}^d$. Given $u\in H^1(D)^d$ that represents the displacement field, we denote
\[
\epsilon =\epsilon (u)= \left[ \epsilon_{ij} =\frac{1}{2} \left(\frac{\partial u_i}{\partial x_j}+ \frac{\partial u_j}{\partial x_i} \right) \right],
\]
where $\epsilon$ is the strain tensor which linearly depends on the derivatives of the displacement field $u$ (see \citet{gonzalez2008first,malvern1969introduction}). We also introduce the stress tensor $\tau(u)$, which depends on the value of strains and is defined as
\begin{equation} \label{tensortau}
\tau =\tau(u)=2\mu\epsilon(u)+\lambda\mbox{tr}\epsilon(u) I_{d\times d},
\end{equation}
where $I_{d\times d}$ is the identity matrix in $\mathbb{R}^d$ and $\mbox{tr}\epsilon(u)=\mbox{div}(u)$. The Lam\'e coefficients $\lambda$ and $\mu$ describe the elastic response of an isotropic material, see, e.g., \citet{gonzalez2008first,kang1996mathematical}.

We assume that the Poisson ratio $\nu=\tfrac{\lambda}{2(\lambda+\mu)}$  is bounded away from $0.5$, i.e., the Poisson ratio satisfies $0<\nu\leq \nu_0<0.5$ for some constant value 
$\nu_0$. The volumetric strain modulus is given by
\[
K=\frac{E}{3(1-2\nu)}>0,
\]
and thus $1-2\nu>0$ (see \citet{kang1996mathematical}). Given these assumptions, then $\nu=\nu(x)$ can only have mild variations in $D$.

We introduce the heterogeneous function $E=E(x)$ that represents the Young's modulus and thus express the shear modulus as
\[
\mu(x)=\frac{1}{2}\frac{E(x)}{1+\nu(x)}=\tilde{\mu}(x)E(x),
\]
where  $\tilde{\mu}=1/2(1+\nu)$. Thus,
\[
\lambda(x)=\frac{1}{2}\frac{E(x)\nu(x)}{(1+\nu(x))(1-2\nu(x))}=\tilde{\lambda}(x)E(x),
\]
where we use $\tilde{\lambda}=\tfrac{\nu}{2(1+\nu)(1-2\nu)}$. The spatial variation of $E$ drives the multiscale response of the solution. We denote
\[
\tilde{\tau}(u)=2\tilde{\mu}\epsilon(u)+\tilde{\lambda}\mbox{tr} \epsilon(u) I_{d\times d}.
\]

Given a vector field $f$ we consider the problem
\begin{equation} \label{eq:1}
-\mbox{div} (\tau(u))=f, \quad \mbox{in }D, 
\end{equation}
with $u=g$ on $\partial D$. The tensor $\tau$ is defined in (\ref{tensortau}). We analyze in detail a binary medium $E(x)$ with elastic background and one inclusion (a stiff body) for the case of one inelastic inclusion. The analysis of the case with several highly inelastic inclusions is similar. To parametrize the problem, we consider the background with stiffness $1$ and the inclusions with a relative stiffness denoted by $\eta$. We derive expansions of the form (see \citet{calo2014asymptotic})
\begin{equation}
u_\eta =u_0+\frac{1}{\eta}u_1+\frac{1}{\eta^2}u_2+\cdots.
\end{equation}
We define each term in the expansion using local problems. We then study the convergence of the expansion in the $H^1$ norm.

The rest of the paper is organized as follows. In Section \ref{Sec2:ProblemSetting} we recall the weak formulation and provide an overview of the derivation of 
the expansion for high contrast inclusions. In Section \ref{Sec3:ExpansionInelastic}, the convergence for this asymptotic expansion is described. Finally, in Section \ref{sec4:Conclusions} we state our conclusions and final comments.

\section{One interior inclusion problem: problem statement}
\label{Sec2:ProblemSetting}

Let $D\subset \mathbb{R}^d$ be a polygonal domain or a domain with smooth boundary. We consider the following weak formulation of (\ref{eq:1}). Find $u\in H^1(D)^d$ such that
\begin{equation}
\label{eq:7}
\left\{\begin{array}{ll}
 {\mathcal A}(u,v)={\cal F}(v), & \mbox{for all } v\in H_0^1(D)^d,\\
\hspace{.4in} u=g, &\mbox{ on } \partial D,
\end{array}\right.
\end{equation}
where the bilinear form ${\mathcal A}$ and the linear functional $\mathcal F$ are defined by
\begin{equation}
\label{eq:8}
\begin{array}{ll}
{\mathcal A}(u,v)=\displaystyle \int_{D}2\tilde{\mu}E\epsilon(u)\cdot \epsilon(v)+\tilde{\lambda}E\mbox{tr}\epsilon(u)\mbox{tr}\epsilon(v),  
 \mbox{ for all }  u,v\in H_0^1(D)^d,\\
\end{array}
\end{equation}
and
\begin{equation}
\begin{array}{ll}
 {\cal F}(v)=\displaystyle\int_Dfv, \mbox{ for all } v\in H_0^1(D)^d,
\end{array}
\end{equation}
respectively, with $\epsilon (u)\cdot \epsilon (v) := \sum_{i,j=1}^{d}\epsilon_{ij}(u)\epsilon_{ij}(v)$.
\begin{figure}[!h]
\centering
\includegraphics[width=0.7\textwidth]{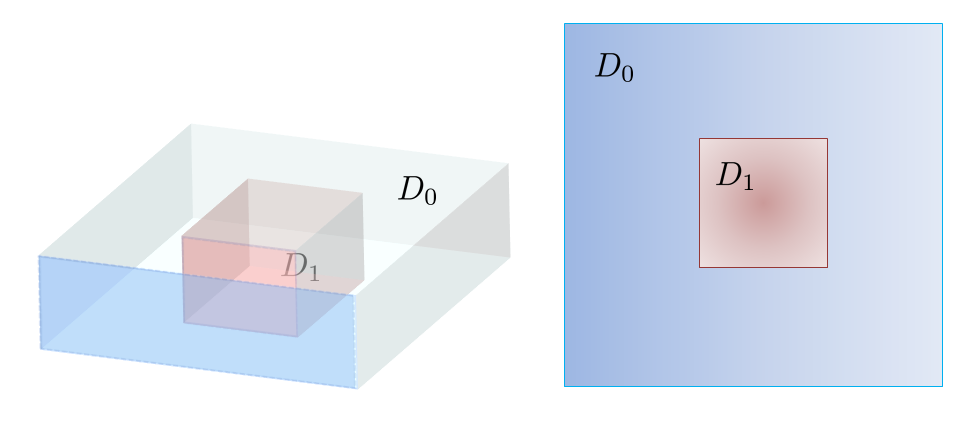}
\caption{Geometric configuration with one interior inclusion.}
\label{Fig1:Geometrycong} 
\end{figure}

The domain $D$ is the disjoint union of a background domain and one inclusion, that is, $D=D_0\cup \overline{D}_1$. We assume that $D_0$ and $D_1$ are polygonal domains or domains with smooth boundaries. Let $D_0$ represent the background domain and the sub-domain $D_1$ represent inclusion. For simplicity of the presentation we consider only one interior inclusion. Given $w\in H^1(D)^d$ we use the notation $w^{(m)}$, for the restriction of $w$ to the domain $D_m$, that is
\[
w^{(m)}=w|_{D_m}, \quad m=0,1.
\]
We also introduce the following notation, given $\Omega \subset D$, we denote by ${\mathcal A}_\Omega$ the bilinear form
\[
{\mathcal A}_\Omega (u,v)=\int_\Omega 2\tilde{\mu}\epsilon(u)\cdot \epsilon(v)+\tilde{\lambda}\mbox{tr}\epsilon(u)\mbox{tr}\epsilon(v), 
\]
defined for functions in $u,v\in H^1(\Omega )^d$. If $\Omega \subseteq D_m$, then ${\mathcal A}_\Omega$ does not depend on the Young's modulus $E(x)$, since $E(x)$ is assumed to be defined by piecewise constants. We denote by ${\mathcal RB}(\Omega)$ the subset of \emph{rigid body motions} defined on $\Omega$, for instance, if $d=2$ we have that
\begin{equation}\label{RB2}
{\mathcal RB}(\Omega)=\{(a_1,a_2)+b(x_2,-x_1): \ a_1,a_2,b \in \mathbb{R} \},
\end{equation}
or if $d=3$ we have
\begin{equation}\label{RB3}
{\mathcal RB}(\Omega)=\{(a_1,a_2,a_3)+(b_1,b_2,b_3)\times (x_1,x_2,x_3): \ a_i,b_i \in \mathbb{R} , i=1,2,3 \}.
\end{equation}

\section{One interior inclusion problem: series expansion}
\label{Sec3:ExpansionInelastic}

We derive and analyze the asymptotic expansion for the case of a single highly elastic inclusion.
We follow \cite{calo2014asymptotic}.

\subsection{Derivation}
\label{subsec:3.1}

Let $E$ be defined by
\begin{equation} \label{eq:11}
E(x)=\left\{\begin{array}{ll}
\eta, & x \in D_1,\\
1, & x \in D_0=D\setminus \overline{D}_1,
\end{array}\right.
\end{equation}
and denote by $u_\eta$ the solution of the weak formulation (\ref{eq:7}). We assume that $D_1$ is compactly included in $D$ ($\overline{D}_1 \subset D$). Since $u_\eta$ is the solution of (\ref{eq:7}) with the coefficient (\ref{eq:11}), we have
\begin{equation}\label{eq:12}
{\mathcal A}_{D_0}(u_\eta,v)+\eta {\mathcal A}_{D_1}(u_\eta,v)={\cal F}(v),\quad  \mbox{for all } v \in H_0^1(D).
\end{equation}
We seek to determine $\{u_j\}_{j=0}^\infty \subset H^1(D)^d$ such that 
\begin{equation} \label{eq:13}
u_\eta =u_0+\frac{1}{\eta}u_1+\frac{1}{\eta ^2}u_2+\cdots =\sum_{j=0}^\infty \eta^{-j}u_j,
\end{equation}
and such that they satisfy the following Dirichlet boundary conditions
\begin{equation}\label{eq:14}
u_0=g \mbox{ on } \partial D \quad \mbox{and } \quad u_j=0 \mbox{ on } \partial D \mbox{ for } j\geq 1. 
\end{equation}
We substitute (\ref{eq:13}) into (\ref{eq:12}) to obtain that for all $v\in H_0^1(D)$ we have
\begin{equation} \label{eq:15new}
\eta{\mathcal A}_{D_1}(u_0,v)+\sum_{j=0}^\infty \eta^{-j}\Big({\mathcal A}_{D_0}(u_j,v)+{\mathcal A}_{D_1}(u_{j+1},v)\Big)={\cal F}(v).
\end{equation}
Now we collect terms with equal powers of $\eta$ and analyze the resulting sub-domain equations.

\subsubsection{Term corresponding to $\eta^1$}

In (\ref{eq:15new}) there is one term corresponding to $\eta$ to the power $1$, thus we obtain the following equation
\begin{equation} \label{eq:16}
{\cal A}_{D_1}(u_0,v)=0 \mbox{ for all }v\in H_0^1(D)^d.
\end{equation}
The problem above corresponds to an elasticity equation posed on $D_1$ with homogeneous Neumann boundary conditions. Since we assume that $\overline{D}_1\subset D$, we conclude that $u_0^{(1)}$ is a \emph{rigid body motion}, that is, $u_0^{(1)}\in {\cal RB}(D_1)$ where ${\cal RB}$ is defined above in (\ref{RB2}) and (\ref{RB3}).

In the general case, the meaning of this equation depends on the relative position of the inclusion $D_1$ with respect to the boundary and thus may need to take the boundary data into account.

\subsubsection{Terms corresponding to $\eta^0=1$}

The equation (\ref{eq:14}) contains three terms corresponding to $\eta$ to the power $0$, which are
\begin{equation} \label{eq:17new}
{\cal A}_{D_0}(u_0,v)+{\cal A}_{D_1}(u_1,v)={\cal F}(v), \mbox{ for all }v\in H_0^1(D)^d.
\end{equation}
Let
\[
V_{\cal RB}=\{ v\in H_0^1(D)^d, \mbox{ such that } v^{(1)}=v|_{D_1} \in {\cal RB}(D_1)\}.
\]
If we consider  $z\in V_{\cal RB}$ in equation (\ref{eq:17new}) we conclude that $u_0$ satifies the following problem
\begin{align}
\label{eq:18new}
{\cal A}_{D_0}(u_0,z)&= {\cal F}(z),&\mbox{for all }  z\in V_{\cal RB},
\end{align}
with $u_0=g$ on  $\partial D$. The problem (\ref{eq:18new}) is elliptic and has a unique solution (for details see \citet{MR1477663}). To analyze this problem further we proceed as follows. Let $\{ \xi_{D_1;\ell}\}_{\ell=1}^{L_d}$ be a basis for the ${\cal RB} (D_1)$ space, where $L_d$ is the dimension of the space ${\cal RB}$, that is, $L_d=3$ for $2D$ problems and $L_d=6$ for $3D$ ones. Then we have that $u_0^{(1)}=\sum_{\ell=1}^{L_d}c_{0;\ell}\xi_{D_1;\ell}$. We define the harmonic extension of the rigid body motions, $\chi_{D_1;\ell}\in H_0^1(D)^d$ such that
\[
\chi_{D_1;\ell}^{(1)}=\xi_{D_1;\ell},\quad \mbox{ in } D_1,
\]
while the harmonic extension of its boundary data in $D_0$ is given by
\begin{eqnarray*}
\label{eq:20}
{\cal A}_{D_0}(\chi _{D_1;\ell}^{(0)},z)&=0, &\mbox{ for all }  z\in H_0^1(D_0)^d,\\
\chi_{D_1;\ell}^{(0)}&=\xi_{D_1;\ell}, &\mbox{ on } \partial D_1,\\
\chi_{D_1;\ell}^{(0)}&=0, &\mbox{ on } \partial D.
\end{eqnarray*}

\begin{rmk} \label{remark1}
Let $w$ be a harmonic extension to $D_0$ of its Neumann data on $\partial D_0$. That is, $w$ satisfies the following problem
\[
{\cal A}_{D_0}(w,v)=\int_{\partial D_0}\widetilde{\tau}(w)\cdot n_0v \quad \mbox{ for all }v \in H^1(D_0)^d,
\]
with boundary data $\tilde{\tau}(w)\cdot n_0$ on $\partial D_0$. Since $\chi_{D_1;\ell}=0$ in $\partial D$  and $\chi_{D_1;\ell}=\xi_{D_1;\ell}$ on $\partial D_1$ with $\ell=1,\dots ,L_d$. We readily have that
\[
{\cal A}_{D_0}(w,\chi_{D_1;\ell})=0\left( \int_{\partial D}\widetilde{\tau}(w)\cdot n_1 \right)+\left( \int_{\partial D_1}\widetilde{\tau}(w)\cdot n_0\xi_{D_1;\ell}\right),
\]
and we conclude that for every harmonic function on $D_0$
\begin{equation}\label{eq:21}
{\cal A}_{D_0}(w,\chi_{D_1;\ell})=\int_{\partial D_1}\widetilde{\tau}(w)\cdot n_0\xi_{D_1;\ell}.
\end{equation}
In particular, taking $w=\chi_{D_1;\ell}$ we have
\begin{equation}\label{eq:22}
{\cal A}_{D_0}(\chi_{D_1;\ell},\chi_{D_1;\ell})=\int_{\partial D_1} \widetilde{\tau}(\chi_{D_1;\ell})\cdot n_0\chi_{D_1;\ell}.
\end{equation}
\end{rmk}

To obtain an explicit formula for $u_0$ we use the fact that problem (\ref{eq:18new}) is elliptic and has a unique solution, and the property of the harmonic characteristic functions described in the Remark \ref{remark1}. Thus, We can decompose $u_0$ into the harmonic extension of its value in $D_1$, given by $ u_0^{(1)}=\sum_{\ell=1}^{L_d}c_{0;\ell}\xi_{D_1;\ell}$, plus the remainder $u_{0,0}\in H^1(D_0)^d$. Thus, we write
\begin{equation}\label{eq:23}
u_0=u_{0,0}+\sum_{\ell=1}^{L_d}c_{0;\ell}\xi_{D_1;\ell},
\end{equation}
where $u_{0,0}\in H^1(D)$ is defined by $u_{0,0}^{(1)}=0$ in $D_1$ and $u_{0,0}^{(0)}$ solves the following Dirichlet problem
\begin{eqnarray}\label{eq:24}
{\cal A}_{D_0}(u_{0,0}^{(0)},y)&=&{\cal F}(y),  \mbox{ for all }y\in H_0^1(D_0)^d,\\\nonumber
u_{0,0}^{(0)}&=&0,  \mbox{ on }\partial D_1,\\
u_{0,0}^{(0)}&=&g,  \mbox{ on }\partial D.\nonumber
\end{eqnarray}
From (\ref{eq:18new}) and  (\ref{eq:23}) we get that 
\begin{equation}\label{eq:25}
\sum_{\ell=1}^{L_d}c_{0;\ell}{\cal A}_{D_0}(\chi_{D_1;\ell},\chi_{D_1;m})={\cal F}(\chi_{D_1;m})-{\cal A}_{D_0}(u_{0,0},\chi_{D_1;m}),
\end{equation}
with $m=1,\dots ,L_d$.  From (\ref{eq:25}) we obtain the constants $c_{0;\ell}$, $\ell=1,\dots,L_d$ by solving a $L_d\times L_d$ linear system. As readily seen, the $L_d\times L_d$ matrix 
\begin{equation}\label{matrixa_lm}
\mathbf{A}_{geom}=\left[ a_{\ell m}\right]_{\ell, m=1}^{L_d},\quad\mbox{where }a_{\ell m}={\cal A}_{D_0}(\chi_{D_1;\ell},\chi_{D_1;m}).
\end{equation}
The matrix $\mathbf{A}_{geom}$ is positive. Given the explicit form of $u_0$, we use (\ref{eq:23}) in (\ref{eq:17new}) to find $u_1^{(1)}=u_1|_{D_1}$ from the analysis of (\ref{eq:17new}) we conclude that $u_0^{(0)}$ satisfies the local Dirichlet problem
\begin{equation*}
{\cal A}_{D_0}(u_0^{(0)},z)=\int_{D_0}fz,\mbox{ for all }z\in H_0^1(D_0)^d,
\end{equation*}
with given  boundary data $\partial D_0$ in \eqref{eq:24}. Equation (\ref{eq:17new}) also represents the transmission conditions across $\partial D_1$ for the functions $u_0^{(0)}$ and $u_1^{(1)}$. This is easier to see when the forcing $f$ is square integrable. From now on, in order to simplify the presentation, we assume that $f\in L^2(D)$. If $f\in L^2(D)$ , we have that $u_0^{(0)}$ and $u_1^{(1)}$ are the only solutions of the problems
\[
{\cal A}_{D_0}(u_0^{(0)},z)=\int_{D_0}fz+\int_{\partial D_0\setminus \partial D}\widetilde{\tau}(u_0^{(0)})\cdot n_0z, \quad \mbox{ for all }z\in H^1(D_0)^d, 
\]
with $z=0$ on $\partial D$ and $u_0^{(0)}=g$ on $\partial D$, and
\[
{\cal A}_{D_1}(u_1^{(1)},z)=\int_{D_1}fz+\int_{\partial D_1}\widetilde{\tau}(u_1^{(1)})\cdot n_1z, \quad \mbox{ for all }z\in H^1(D_1)^d. 
\]
Replacing these last two equations back into (\ref{eq:17new}) we conclude that
\begin{equation}\label{eq:26}
\widetilde{\tau}(u_1^{(1)})\cdot n_1=-\widetilde{\tau}(u_0^{(0)})\cdot n_0, \quad \mbox{ on }\partial D_1.
\end{equation}
Using this interface condition we can obtain $u_1^{(1)}$ in $D_1$ by writing
\begin{equation}\label{eq:27-1}
u_1^{(1)}=\widetilde{u}_1^{(1)}+\sum_{\ell=1}^{L_d}c_{1;\ell}\xi_{D_1;\ell},
\end{equation}
where $\widetilde{u}_1^{(1)}$ solves the Neumann problem
\begin{equation} \label{eq:27}
{\cal A}_{D_1}(\widetilde{u}_1^{(1)},z)=\int_{D_1}fz-\int_ {\partial D_1}\widetilde{\tau}(u_0^{(0)})\cdot n_1z, \quad \mbox{for all }z\in H^1(D_1)^d.
\end{equation}
where the constants $c_{1;\ell}$ are chosen later. Problem (\ref{eq:27}) needs the following compatibility conditions
\[
\int_{D_1}f\xi+\int_{\partial D_1}\widetilde{\tau}(u_0^{(0)})\cdot n_1\xi =0, \quad \mbox{for all }\xi \in {\cal RB},
\]
which, using (\ref{eq:23}) and (\ref{eq:26}) and noting that $\chi_{D_1;\ell}$ in $D_1$, reduces to
\begin{equation} \label{eq:28}
\sum_{\ell =1}^{L_d}c_{0;\ell}\int_{\partial D_1}\widetilde{\tau}(\chi_{D_1;\ell})\cdot n_1\chi_{D_1;m}=\int_{D_1}f\chi_{D_1;m}-\int_{\partial D_1}\widetilde{\tau}(\widetilde{u}_{0,0})\cdot n_1\chi_{D_1;m} 
\end{equation}
for $m=1,\dots ,L_d$. This system of $L_d$ equations is the same encountered before in (\ref{eq:25}). The fact that the two systems are the same follows from the next two integration by parts relations:
\begin{enumerate}
\item [(i)] according to Remark \ref{remark1}
\begin{equation} \label{eq:29}
\int_{\partial D_1}\widetilde{\tau}(\chi_{D_1;\ell})\cdot n_1\chi_{D_1;m}={\cal A}_{D_0}(\chi_{D_1;\ell},\chi_{D_1;m}).
\end{equation}
\item [(ii)] we have
\begin{equation} \label{eq:30}
\int_{\partial D_1}\widetilde{\tau}\left(\widetilde{u}_{0,0}\cdot n_1\chi_{D_1;m}\right)={\cal A}_{D_0}(u_{0,0},\chi_{D_1;m})-\int_{D_0}f\chi_{D_1;m}.
\end{equation}
\end{enumerate}
By replacing the relations in \eqref{eq:29} and \eqref{eq:30} into (\ref{eq:28}) we obtain (\ref{eq:25}) and conclude that the compatibility condition of problem (\ref{eq:27}) is satisfied. Next, we discuss how to compute $u_1^{(0)}$ and $\widetilde{u}_1^{(0)}$ to completely define the functions $u_1\in H^1(D)^d$  and $\widetilde{u}_1\in H^1(D)^d$. These are presented for general $j\geq 1$ since the construction is independent of $j$ in this range.

\subsubsection{Term corresponding to $\eta^{-j}$ with $j\geq 1$}
For powers $1/\eta$ larger or equal to one there are only two terms in the summation that lead to the following system
\begin{equation}\label{eq:31}
{\cal A}_{D_0}(u_j,v)+{\cal A}_{D_1}(u_{j+1},v)=0, \quad \mbox{for all }v\in H_0^1(D)^d.
\end{equation}
This equation represents both the sub-domain problems and the transmission conditions across $\partial D_1$ for $u_j^{(0)}$ and $u_{j+1}^{(1)}$. Following a similar argument to the one given above, we conclude that $u_j^{(0)}$ is harmonic in $D_0$ for all $j\geq 1$ and that $u_{j+1}^{(1)}$ is harmonic in $D_1$ for $j\geq 2$. As before, we have
\begin{equation}\label{eq:32}
\widetilde{\tau}(u_{j+1}^{(1)})\cdot n_1=-\tau (u_j^{(0)})\cdot n_0.
\end{equation}
Since $u_j^{(1)}$ in $D_1$, (e.g., $u_1^{(1)}$ above) is given by the solution of a Neumann problem in $D_1$. The solution of a Neumann linear elasticity problem is defined up to a rigid body motion. To uniquely determine $u_j^{(1)}$, we write
\begin{equation} \label{eq:33}
u_j^{(1)}=\widetilde{u}_j^{(1)}+\sum_{\ell =1}^{L_d}c_{j;\ell}\xi_{D_1;\ell},
\end{equation}
where $u_j^{(1)}$ is $L^2$-orthogonal to the rigid body motion of $D_1$ and the appropriate $c_{j;\ell}$ is determined below.
Given $u_j^{(1)}$ in $D_1$ we find $u_j^{(0)}$ in $D_0$ by solving a Dirichlet problem with known Dirichlet data, that is,
\begin{equation}\label{eq:34}
\begin{array}{l}
\mathcal{A}_{D_0}(u_{j}^{(0)}, z)=0 \mbox{ for all } 
z\in H^1_0(D_0)^d \\
u_j^{(0)}=u_j^{(1)} \ \left(=\widetilde{u}_j^{(1)}+\displaystyle \sum_{\ell=1}^{L_d}
c_{j;\ell}\xi_{D_1;\ell}\right) \mbox{ on } 
\partial D_1
\quad \mbox{ and } \quad u_j=0 \mbox{ on } \partial D.
\end{array} 
\end{equation}
We conclude that
\begin{equation} \label{eq:35}
u_j=\widetilde{u_j}+\sum_{\ell =1}^{L_d}c_{j;\ell}\chi_{D_1;\ell},
\end{equation}
where $\widetilde{u}_{j}^{(0)}$ is defined by (\ref{eq:34}) replacing $c_{j;\ell}$ by $0$. This completes the construction of $u_j$.
Now we proceed to show how to find $u_{j+1}^{(1)}$ in $D_1$. For this, we use (\ref{eq:30}) and (\ref{eq:31}) which lead to the following Neumann problem
\begin{equation} \label{eq:36}
{\cal A}_{D_1}(\widetilde{u}_{j+1}^{(1)},z)=-\int_{\partial D_1}\widetilde{\tau}(u_{j}^{(0)})\cdot n_0z \quad \mbox{for all }z\in H^1(D_1)^d.
\end{equation}
The compatibility condition for this Neumann problem is satisfied if we choose $c_{j;\ell}$ the solution of the $L_d\times L_d$ system 
\begin{equation} \label{eq:37}
\sum_{\ell =1}^{L_d}c_{j;\ell}\int_{\partial D_1}\widetilde{\tau}(\chi_{D_1;\ell}\cdot n_1\chi_{D_1;m}=-\int_{\partial D_1}\widetilde{\tau}(\widetilde{u}_j)\cdot n_1\chi_{D_1;m},
\end{equation}
with $m=1,\dots ,L_d$. As pointed out before, see (\ref{eq:29}), this system can be written as
\[
\sum_{\ell =1}^{L_d}c_{j;\ell}{\cal A}_{D_0}(\chi_{D_1;\ell},\chi_{D_1;m})=-{\cal A}_{D_0}(\widetilde{u}_j,\chi_{D_1;m}).
\]
In this form we readily see that this $L_d\times L_d$ system matrix is positive definite and therefore solvable.
We can choose $u_{j+1}^{(1)}$ in $D_1$ such that
\[
u_{j+1}^{(1)}=\widetilde{u}_{j+1}^{(1)}+\sum_{\ell =1}^{L_d}c_{j+1;\ell}\xi_{D_1;\ell},
\]
where $\widetilde{u}_{j+1}^{(1)}$ is properly chosen and, as before
\[
\sum_{\ell =1}^{L_d}c_{j+1,\ell}\int_{\partial D_1}\widetilde{\tau}(\chi_{D_1;\ell})\cdot \chi_{D_1;m}=-\int_{\partial D_1}\widetilde{u}_{j+1}\cdot n_1\chi_ {D_1;m},
\]
$m=1,\dots , L_d$. Therefore we have the compatibility condition of the Neumann problem to compute $u_{j+2}^{(1)}$. See the equation (\ref{eq:36}).

\subsection{Convergence in $H^1(D)^d$}\label{convergenceelasti}

We study the convergence of the expansion (\ref{eq:13}) with the Dirichlet data (\ref{eq:14}). We assume that $\partial D$ and $\partial D_1$ are sufficiently smooth. We follow the analysis introduced in \citet{calo2014asymptotic}.

We use standard Sobolev spaces (see for instance, \citet{Adams-book2003}). Given a sub-domain $D$, we use the $H^1(D)^d$ norm given by
\[
\| v\|_{H^1(D)^d}^2=\|v\|_{L^2(D)^d}^2+\|\nabla v\|_{L^2(D)^d}^2, 
\]
and the seminorm
\[
|v|_{H^1(D)^d}^2=\|\nabla v\|_{L^2(D)^d}^2.
\]
We also use standard trace spaces $H^{1/2}(\partial D)^d$ and the dual space $H^{-1}(D)^d$.

\begin{lem}\label{mainlemma}
Let $\widetilde{w}\in H^1(D)$ be harmonic in $D_0$ and define 
\[
w=\widetilde{w}+\sum_{\ell =1}^{L_d}c_{0;\ell}\xi_{D_1;\ell},
\]
where $Y=(c_{0;1},\dots ,c_{0;L_d})$ is the solution of the $L_d$-dimensional linear system
\begin{equation}\label{Ld system}
\mathbf{A}_{geom} Y=-W,
\end{equation}
where
\[
\mathbf{A}_{geom}=\left[a_{\ell m}\right]_{\ell ,m=1}^{L_d},\quad\mbox{with }a_{\ell ,m}={\cal A}_{D_0}(\xi_{D_1;\ell},\xi_{D_1;m}).
\]
and
\begin{equation}
W=({\cal A}_{D_0}(\widetilde{w},\xi_{D_1;1}),\dots, {\cal A}_{D_0}(\widetilde{w},\xi_{D_1;L_d})),
\end{equation}
we recall that $L_d=1,\dots ,\ell$ is the spatial dimension of the subset of rigid body motions ${\cal RB}$. Then,
\[
\|w\|_{H^1(D)^d}\preceq\|\widetilde{w}\|_{H^1(D)^d},
\]
where the hidden constant is the Korn inequality constant of $D$.
\end{lem}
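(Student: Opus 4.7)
The plan is to prove the bound in three stages: first establish that $\mathbf{A}_{geom}$ is symmetric positive definite with a quantitatively invertible inverse, then use this to estimate the coefficient vector $Y$ in terms of $\|\widetilde{w}\|_{H^1(D)^d}$, and finally assemble the pieces with the triangle inequality. I would first clarify the notation in the statement: since the rigid body motions $\xi_{D_1;\ell}$ are only defined on $D_1$, the entries $a_{\ell m}$ must be interpreted through the harmonic extensions $\chi_{D_1;\ell}$ introduced in Section \ref{Sec3:ExpansionInelastic}, and the function $w$ in the lemma must likewise be read as $w = \widetilde{w} + \sum_{\ell} c_{0;\ell}\chi_{D_1;\ell}$ on $D_0$ while coinciding with $\widetilde{w} + \sum_{\ell} c_{0;\ell}\xi_{D_1;\ell}$ on $D_1$, so that $w\in H^1(D)^d$.

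For the first stage, $\mathbf{A}_{geom}$ is the Gram matrix of $\{\chi_{D_1;\ell}\}_{\ell=1}^{L_d}$ under the inner product $\mathcal{A}_{D_0}(\cdot,\cdot)$, hence symmetric and positive semidefinite. Positive definiteness follows because the traces of the $\chi_{D_1;\ell}$ on $\partial D_1$ are linearly independent (they equal the independent rigid body motions $\xi_{D_1;\ell}$) while the $\chi_{D_1;\ell}$ vanish on $\partial D$, so Korn's inequality on $D_0$ (applied to functions vanishing on a part of the boundary) gives $\mathcal{A}_{D_0}(v,v)\gtrsim \|v\|_{H^1(D_0)^d}^2$ on their span. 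This yields a lower bound on the smallest eigenvalue of $\mathbf{A}_{geom}$ depending only on $D_0$, on $D_1$, on the Lamé parameters, and on the Korn constant of $D$; consequently $\|\mathbf{A}_{geom}^{-1}\|\le C$.

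For the second stage, I would bound each entry of $W$ by Cauchy--Schwarz in $\mathcal{A}_{D_0}$,
\[
|\mathcal{A}_{D_0}(\widetilde{w},\chi_{D_1;m})| \preceq |\widetilde{w}|_{H^1(D_0)^d}\,|\chi_{D_1;m}|_{H^1(D_0)^d}\preceq \|\widetilde{w}\|_{H^1(D)^d},
\]
since each $\|\chi_{D_1;m}\|_{H^1(D_0)^d}$ is a fixed geometric constant. Solving $\mathbf{A}_{geom} Y = -W$ then gives $|c_{0;\ell}|\preceq \|\widetilde{w}\|_{H^1(D)^d}$ for every $\ell$. For the third stage, the triangle inequality yields
\[
\|w\|_{H^1(D)^d} \le \|\widetilde{w}\|_{H^1(D)^d} + \sum_{\ell=1}^{L_d} |c_{0;\ell}|\,\|\chi_{D_1;\ell}\|_{H^1(D)^d} \preceq \|\widetilde{w}\|_{H^1(D)^d},
\]
which is the claimed estimate.

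The main obstacle is the first stage, namely tracking how the constant in $\|\mathbf{A}_{geom}^{-1}\|$ depends on the Korn constant of $D$ (and on $D_0$, $D_1$) and verifying that the harmonic extensions form a linearly independent set in $H^1(D_0)^d$ endowed with the $\mathcal{A}_{D_0}$ inner product. Once the coercivity lower bound for $\mathbf{A}_{geom}$ is in place, the remaining steps reduce to Cauchy--Schwarz and the triangle inequality, and the hidden constant inherits its dependence on the Korn constant from that single application of Korn's inequality on $D_0$.
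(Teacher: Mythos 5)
Your proof is correct, but it takes a different route from the paper's. The paper observes that $\sum_{\ell}c_{0;\ell}\xi_{D_1;\ell}$ is precisely the Galerkin projection of $-\widetilde{w}$ onto $\mathrm{Span}\{\xi_{D_1;\ell}\}$ with respect to the inner product $\mathcal{A}_{D_0}(\cdot,\cdot)$, so the identity $Y^{T}\mathbf{A}_{geom}Y=-Y^{T}W=-\mathcal{A}_{D_0}(\widetilde{w},\sum_{\ell}c_{0;\ell}\xi_{D_1;\ell})$ plus one Cauchy--Schwarz and one Korn inequality gives directly $\left|\sum_{\ell}c_{0;\ell}\xi_{D_1;\ell}\right|_{H^1(D_0)^d}\preceq|\widetilde{w}|_{H^1(D_0)^d}$, without ever bounding $\|\mathbf{A}_{geom}^{-1}\|$ or the individual coefficients $c_{0;\ell}$. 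You instead invert the Gram matrix quantitatively: lower-bound $\lambda_{\min}(\mathbf{A}_{geom})$ via Korn on $D_0$ and linear independence of the traces, bound each entry of $W$ by Cauchy--Schwarz, deduce $|c_{0;\ell}|\preceq\|\widetilde{w}\|_{H^1(D)^d}$, and finish with the triangle inequality. Both arguments are sound, and your preliminary remark that $\xi_{D_1;\ell}$ must be read as its harmonic extension $\chi_{D_1;\ell}$ on $D_0$ is a genuine clarification the paper glosses over. The trade-off is in the constants: the paper's energy argument yields a hidden constant controlled essentially by the Korn constant alone (consistent with the lemma's closing claim), whereas your constant additionally involves $\lambda_{\min}(\mathbf{A}_{geom})^{-1}$ and $\max_{\ell}\|\chi_{D_1;\ell}\|_{H^1(D)^d}$, which are separate geometric quantities of the configuration $(D_0,D_1)$; so your final sentence attributing the whole constant to the single application of Korn's inequality slightly undersells what your own argument actually uses. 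On the other hand, your route gives explicit control of each $c_{0;\ell}$, which the projection argument does not provide.
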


\begin{proof}
Since $\sum_{\ell =1}^{L_d}c_{0;\ell}\xi_{D_{1};\ell}$ is the Galerkin projection of $\widetilde{w}$ into the space $\mbox{Span} \{ \xi_{D_1;\ell}\}_{\ell =1}^{L_d}$. From the analysis of Galerkin formulations, we have
\begin{eqnarray*}
{\cal A}_{D_0}\left(\sum_{\ell=1}^{L_d}c_{0;\ell}\xi_{D_1;\ell},\sum_{\ell=1}^{L_d}c_{0;\ell}\xi_{D_1;\ell}\right) & = & Y^{T}{\bf A}_{geom}Y=-Y^{T}W\\
& = & -\sum_{\ell=1}^{L_d}c_{0;\ell}{\cal A}_{D_0}(\widetilde{w},\xi_{D_1;\ell})\\
& = & -{\cal A}_{D_0}\left(\widetilde{w},\sum_{\ell=1}^{L_d}c_{0;\ell}\xi_{D_1;\ell}\right)\\
& \leq & |\widetilde{w}|_{H^1(D)^d}\left|\sum_{\ell=1}^{L_d}c_{0;\ell}\xi_{D_1;\ell}\right|_{H^1(D_0)^d},
\end{eqnarray*}
by the Korn inequality
\begin{eqnarray*}
\left| \sum_{\ell =1}^{L_d}c_{0;\ell}\xi_{D_1;\ell} \right|_{H^1(D_0)^d}^2 & \leq  & C{\cal A}_{D_0}\left(\sum_{0;\ell}^{L_d}c_{0;\ell}\xi_{D_1;\ell},\sum_{0;\ell}^{L_d}c_{0;\ell}\xi_{D_1;\ell}\right)\\
& \leq & |\widetilde{w}|_{H^1(D_0)^d}\left|\sum_{\ell =1}^{L_d}C_{0;\ell}\xi_{D_1;\ell}\right|_{H^1(D_0)^d},
\end{eqnarray*}
so
\[
\left|\sum_{\ell=1}^{L_d}c_{0;\ell}\xi_{D_1;\ell}\right|_{H^1(D_0)^d}\leq |\widetilde{w}|_{H^1(D_0)^d}.
\]
Using the fact above, we get
\[
\|w\|_{H^1(D)^d}\leq \| \widetilde{w}\|_{H^1(D)^d}+\left\Vert \sum_{\ell=1}^{L_d}c_{0;\ell}\xi_{D_1;\ell}\right\Vert _{H^1(D)^d}\preceq \| \widetilde{w}\|_{H^1(D)^d}.
\]
\end{proof}

For the proof of the convergence of the expansion (\ref{eq:13}) with the boundary condition (\ref{eq:14}), we consider the following additional results obtained by applying the Lax-Milgram theorem in \citet{Brezis-book2010} and the trace theorem in \citet{Adams-book2003}.   

\begin{lem}\label{lemmaconver1}
Let $u_0$ in (\ref{eq:23}), with $u_{0,0}$ defined in (\ref{eq:24}), and $u_1$ be defined by (\ref{eq:27}) and (\ref{eq:34}) with $j=1$. We have that
\begin{equation} \label{eq:4.35}
\|u_0\|_{H^1(D)^d}\preceq  \|f\|_{H^{-1}(D)^d}+\|g\|_{H^{1/2}(\partial D)^d},
\end{equation}
\begin{equation}\label{eq:4.36}
\|\widetilde{u}_1\|_{H^1(D_1)^d} \preceq  \|f\|_{H^{-1}(D_1)^d}+\|g\|_{H^{1/2}(\partial D)^d}
\end{equation}
and
\begin{equation}\label{eq:4.37}
\|\widetilde{u}_1\|_{H^1(D_0)^d}\preceq  \|\widetilde{u}_1\|_{H^{1/2}(\partial D_1)^d} \preceq \|\widetilde{u}_1\|_{H^{1}( D_1)^d}.
\end{equation}
\end{lem}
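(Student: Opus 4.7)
The plan is to establish each of the three estimates by recognizing that $u_0$, $\widetilde{u}_1^{(1)}$, and $\widetilde{u}_1^{(0)}$ are defined as solutions of variational problems on $D$, $D_1$, and $D_0$ respectively, and applying the Lax--Milgram framework together with Korn's inequality, a trace inequality, and a standard lifting of Dirichlet data.

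For \eqref{eq:4.35}, I would reduce \eqref{eq:18new} to a homogeneous problem by subtracting an $H^1$-lifting $\mathcal{E}g$ of the boundary data, with $\|\mathcal{E}g\|_{H^1(D)^d}\preceq \|g\|_{H^{1/2}(\partial D)^d}$. The unknown $u_0-\mathcal{E}g$ lives in the closed subspace $V_{\mathcal{RB}}\cap H^1_0(D)^d$. On this subspace $\mathcal{A}_{D_0}$ is coercive: Korn's inequality on $D_0$ controls $\|v^{(0)}\|_{H^1(D_0)^d}$ via the homogeneous Dirichlet trace on $\partial D$, and the trace on $\partial D_1$ pins down the finite-dimensional rigid-body component on $D_1$. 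Continuity is immediate, and $v\mapsto \mathcal{F}(v)-\mathcal{A}_{D_0}(\mathcal{E}g,v)$ is bounded by $\|f\|_{H^{-1}(D)^d}+\|g\|_{H^{1/2}(\partial D)^d}$. Lax--Milgram then delivers \eqref{eq:4.35}.

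For \eqref{eq:4.36}, $\widetilde{u}_1^{(1)}$ solves the pure Neumann problem \eqref{eq:27} on $D_1$. I would work in the quotient $H^1(D_1)^d/\mathcal{RB}(D_1)$, on which Korn's second inequality makes $\mathcal{A}_{D_1}$ coercive. The hard step is estimating the right-hand side, in particular the boundary term $\int_{\partial D_1}\widetilde{\tau}(u_0^{(0)})\cdot n_1 z$. Since $u_0^{(0)}$ is only in $H^1(D_0)^d$, the normal traction must be interpreted distributionally: given $z\in H^1(D_1)^d$, choose an extension $\mathcal{L}z\in H^1(D_0)^d$ with $\mathcal{L}z=z$ on $\partial D_1$, $\mathcal{L}z=0$ on $\partial D$, and $\|\mathcal{L}z\|_{H^1(D_0)^d}\preceq \|z\|_{H^{1/2}(\partial D_1)^d}\preceq \|z\|_{H^1(D_1)^d}$, and set
\[
\int_{\partial D_1}\widetilde{\tau}(u_0^{(0)})\cdot n_1\, z = -\mathcal{A}_{D_0}(u_0^{(0)},\mathcal{L}z)+\int_{D_0} f\cdot \mathcal{L}z,
\]
which is the usual definition of the normal trace consistent with the PDE for $u_0^{(0)}$. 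Bounding this using \eqref{eq:4.35} yields $|\cdot|\preceq (\|f\|_{H^{-1}(D)^d}+\|g\|_{H^{1/2}(\partial D)^d})\|z\|_{H^1(D_1)^d}$; combining with the Lax--Milgram estimate and the trivial $\|f\|_{H^{-1}(D_1)^d}$ contribution from the volume term gives \eqref{eq:4.36}.

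For \eqref{eq:4.37}, the function $\widetilde{u}_1^{(0)}$ is defined by \eqref{eq:34} with $j=1$ and all $c_{1;\ell}$ set to zero, i.e., it solves the homogeneous elasticity equation on $D_0$ with Dirichlet data $\widetilde{u}_1^{(1)}|_{\partial D_1}$ on $\partial D_1$ and zero on $\partial D$. Lift the boundary data into $H^1(D_0)^d$ with norm controlled by $\|\widetilde{u}_1^{(1)}\|_{H^{1/2}(\partial D_1)^d}$ and apply Lax--Milgram (coercivity again from Korn) to obtain the first inequality; the second is the standard trace theorem $\|v\|_{H^{1/2}(\partial D_1)^d}\preceq \|v\|_{H^1(D_1)^d}$. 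The main obstacle in the overall argument is the boundary-traction estimate in step (2), since it requires carefully setting up the distributional normal trace and invoking the interior equation for $u_0^{(0)}$ to transfer regularity from the bulk to $\partial D_1$.
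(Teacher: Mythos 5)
Your proof is correct, but your treatment of \eqref{eq:4.35} takes a different route from the paper's. The paper never works on the constrained space $V_{\mathcal{RB}}$ directly: it uses the explicit splitting \eqref{eq:23}, $u_0=u_{0,0}+\sum_{\ell}c_{0;\ell}\xi_{D_1;\ell}$, bounds $u_{0,0}$ by Lax--Milgram for the Dirichlet problem \eqref{eq:24}, and then controls the rigid-body part $\sum_{\ell}c_{0;\ell}\xi_{D_1;\ell}$ by rerunning the Galerkin-projection/quasi-optimality argument of Lemma~\ref{mainlemma} on the $L_d\times L_d$ system \eqref{eq:25} (with the extra $\int_D f(\sum_\ell c_{0;\ell}\xi_{D_1;\ell})$ term). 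You instead lift the boundary data and apply Lax--Milgram once on $V_{\mathcal{RB}}\cap H^1_0(D)^d$, getting coercivity from Korn on $D_0$ plus the observation that the trace on $\partial D_1$ pins down the finite-dimensional component on $D_1$. Your version is shorter and avoids re-deriving the coefficient bound, at the price of two details you should make explicit: (i) the lifting $\mathcal{E}g$ must be chosen supported away from $\overline{D}_1$ (possible since $\overline{D}_1\subset D$), otherwise $u_0-\mathcal{E}g\notin V_{\mathcal{RB}}$; (ii) the coercivity constant on $V_{\mathcal{RB}}$ with respect to the full $H^1(D)^d$ norm, which you only sketch. The paper's decomposition has the advantage of matching exactly how $u_0$ is computed in Section~\ref{Sec3:ExpansionInelastic}. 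For \eqref{eq:4.36} and \eqref{eq:4.37} the paper gives no details (``Korn inequality for Neumann/Dirichlet conditions and the trace theorem''), and you supply the genuinely nontrivial missing step: the distributional definition of the normal traction $\int_{\partial D_1}\widetilde{\tau}(u_0^{(0)})\cdot n_1 z=-\mathcal{A}_{D_0}(u_0^{(0)},\mathcal{L}z)+\int_{D_0}f\cdot\mathcal{L}z$ via a bounded extension $\mathcal{L}z$, which is exactly what makes the Neumann data in \eqref{eq:27} estimable by \eqref{eq:4.35}. Note that this route naturally produces $\|f\|_{H^{-1}(D)^d}$ rather than only $\|f\|_{H^{-1}(D_1)^d}$ on the right of \eqref{eq:4.36}; that is harmless for the convergence theorem and arguably the honest form of the estimate.
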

 
\begin{proof}
From the definition of $u_{0,0}$ in (\ref{eq:24}) we have that
\[
\|u_{0,0}\|_{H^1(D_0)^d}\preceq\|f\|_{H^{-1}(D_0)^d}+\|g\|_{H^{1/2}(\partial D)^d},
\]
for more details see for instance, \citet{Adams-book2003}.

Now, using the Korn inequality for Dirichlet data and the Lax-Milgram theorem we have
\[
\|u_0\|_{H^1(D)^d}\preceq |u_0|_{H^1(D)^d}=|u_0|_{H^1(D_0)^d}\leq |u_{0,0}|_{H^1(D_0)^d}+\left|\sum_{\ell=1}^{L_d}c_{0;\ell}\xi_{D_1;\ell}\right|_{H^1(D_0)^d},
\]
and using a similar argument  to the one used in Lemma \ref{mainlemma}, we have that
\begin{eqnarray}\nonumber
\left|\sum_{\ell=1}^{L_d}c_{0;\ell}\xi_{D_1;\ell}\right|_{H^1(D_0)^d}^2 & \leq & C\left|u_{0,0}\right|_{H^1(D_0)^d}\left|\sum_{\ell=1}^{L_d}c_{0;\ell}\xi_{D_1;\ell}\right|_{H^1(D_0)^d}+\int_{D} f\left(\sum_{\ell=1}^{L_d}c_{0;\ell}\xi_{D_1;\ell}\right)\\\nonumber
& \preceq & \left|u_{0,0}\right|_{H^1(D_0)^d}\left|\sum_{\ell=1}^{L_d}c_{0;\ell}\xi_{D_1;\ell}\right|_{H^1(D_0)^d}+\|f\|_{H^{-1}(D)^d}\left|\sum_{\ell=1}^{L_d}c_{0;\ell}\xi_{D_1;\ell}\right|_{H^1(D_0)^d},\nonumber
\end{eqnarray}
so
\begin{equation*}
\left|\sum_{\ell=1}^{L_d}c_{0;\ell}\xi_{D_1;\ell}\right|_{H^1(D_0)^d}\preceq \left|u_{0,0}\right|_{H^1(D_0)^d}+\|f\|_{H^{-1}(D)^d}.
\end{equation*}
Using this fact and the definition of $u_{0,0}$, we conclude that
\[
\|u_0\|_{H^1(D)^d}\preceq\|f\|_{H^{-1}(D)^d}+\|g\|_{H^{1/2}(\partial D)^d}.
\]
This concludes the proof of \eqref{eq:4.35}.

Equation (\ref{eq:4.36}) uses a similar argument to the one used in the above proof, we use the Korn inequality for Neumann conditions and the trace theorem. Finally, the equation (\ref{eq:4.37}) is obtained using Korn inequality for Dirichlet conditions and the trace theorem. Details are not included for the sake of brevity.
\end{proof}

\begin{lem}\label{lemmaconver2}
Let $u_j$ defined on $D_0$ by (\ref{eq:34}) with $c_{j;\ell}$ and $u_{j+1}$ defined on $D_1$ by (\ref{eq:36}). For $j\geq 1$ we have that
\[
\|u_{j+1}\|_{H^1(D)^d}\preceq \|u_j\|_{H^1(D_0)^d}.
\] 
\end{lem}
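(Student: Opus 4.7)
The plan is to chain three estimates: a Neumann regularity bound for $\widetilde{u}_{j+1}^{(1)}$ on $D_1$, a harmonic (in the elasticity sense) extension bound from $\partial D_1$ into $D_0$, and Lemma \ref{mainlemma} to absorb the rigid body correction. All three ingredients are already in place in the excerpt; what remains is to combine them cleanly and to keep track of the fact that $u_j^{(0)}$ is elasticity-harmonic in $D_0$ for $j\geq 1$ (as established by (\ref{eq:31})--(\ref{eq:34})).

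First I would estimate $\widetilde{u}_{j+1}^{(1)}$ defined by the Neumann problem (\ref{eq:36}). The compatibility condition is built in through the choice of $c_{j;\ell}$ in (\ref{eq:37}), so Lax--Milgram on the quotient $H^1(D_1)^d/\mathcal{RB}(D_1)$ (with the $L^2$-orthogonality normalization used in (\ref{eq:33})) together with Korn's inequality on $D_1$ yields
\[
\|\widetilde{u}_{j+1}^{(1)}\|_{H^1(D_1)^d} \preceq \|\widetilde{\tau}(u_j^{(0)})\cdot n_0\|_{H^{-1/2}(\partial D_1)^d}.
\]
Because $u_j^{(0)}$ is elasticity-harmonic in $D_0$, the normal stress $\widetilde{\tau}(u_j^{(0)})\cdot n_0$ is well defined in $H^{-1/2}(\partial D_1)^d$ by integration by parts against any $H^{1/2}(\partial D_1)^d$ datum and its $H^{-1/2}$ norm is controlled by $|u_j^{(0)}|_{H^1(D_0)^d}$. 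This gives the first link,
\[
\|\widetilde{u}_{j+1}^{(1)}\|_{H^1(D_1)^d} \preceq \|u_j\|_{H^1(D_0)^d}.
\]

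Next I would control the part of $\widetilde{u}_{j+1}$ living on the background. By construction in (\ref{eq:34}), $\widetilde{u}_{j+1}^{(0)}$ is the elasticity-harmonic extension of $\widetilde{u}_{j+1}^{(1)}|_{\partial D_1}$ with zero trace on $\partial D$. Using the trace theorem followed by the stable-extension estimate (this is exactly the chain already written down in (\ref{eq:4.37}) for $j=1$, which carries through verbatim for general $j$) I obtain
\[
\|\widetilde{u}_{j+1}\|_{H^1(D_0)^d} \preceq \|\widetilde{u}_{j+1}^{(1)}\|_{H^{1/2}(\partial D_1)^d} \preceq \|\widetilde{u}_{j+1}^{(1)}\|_{H^1(D_1)^d} \preceq \|u_j\|_{H^1(D_0)^d}.
\]
Combined with the $D_1$ estimate this yields $\|\widetilde{u}_{j+1}\|_{H^1(D)^d} \preceq \|u_j\|_{H^1(D_0)^d}$.

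Finally, the decomposition (\ref{eq:35}), $u_{j+1}=\widetilde{u}_{j+1}+\sum_{\ell}c_{j+1;\ell}\chi_{D_1;\ell}$ with the constants $c_{j+1;\ell}$ fixed by the $\mathbf{A}_{geom}$ system coming from (\ref{eq:37}), is precisely the setup of Lemma \ref{mainlemma}. Applying it gives $\|u_{j+1}\|_{H^1(D)^d} \preceq \|\widetilde{u}_{j+1}\|_{H^1(D)^d}$, closing the chain. The hidden constant is purely geometric: a Korn constant on $D_1$, a Korn/Dirichlet constant on $D_0$, and trace/extension constants, none of which depend on $j$.

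The main obstacle is the flux bound $\|\widetilde{\tau}(u_j^{(0)})\cdot n_0\|_{H^{-1/2}(\partial D_1)^d} \preceq |u_j^{(0)}|_{H^1(D_0)^d}$. This step is where harmonicity of $u_j^{(0)}$ is essential: one tests the identity $\mathcal{A}_{D_0}(u_j^{(0)},v)=\int_{\partial D_1}\widetilde{\tau}(u_j^{(0)})\cdot n_0\,v$ against a stable lifting into $D_0$ of an arbitrary $H^{1/2}(\partial D_1)^d$ trace vanishing on $\partial D$, and reads off the dual norm. Everything else is bookkeeping against the estimates already proved in Lemmas \ref{mainlemma} and \ref{lemmaconver1}.
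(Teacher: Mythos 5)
Your proof is correct and follows essentially the same route as the paper's: Lemma \ref{mainlemma} to absorb the rigid-body correction, the trace/extension estimate to pass from $D_1$ to $D_0$, and the Korn/Lax--Milgram estimate for the Neumann problem (\ref{eq:36}) on $D_1$ to reach $\|u_j\|_{H^1(D_0)^d}$. The only difference is that you make explicit the $H^{-1/2}(\partial D_1)^d$ duality bound on the conormal flux $\widetilde{\tau}(u_j^{(0)})\cdot n_0$ of the elasticity-harmonic $u_j^{(0)}$, a step the paper compresses into a single appeal to the Korn inequality and the trace theorem.
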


\begin{proof}
Let $j\geq 1$. Consider $\widetilde{u}_{j+1}$ defined by the Dirichlet in (\ref{eq:34}). From the Lemma \ref{mainlemma} %
and combining the Korn inequality for Dirichlet conditions and the trace theorem, 
we have
\[
\|u_{j+1}\|_{H^1(D)^d}\preceq \|\widetilde{u}_{j+1}\|_{H^1(D)^d}\leq C\|\widetilde{u}_{j+1}\|_{H^1(D_1)^d},
\]
applying the Korn inequality for the Dirichlet conditions in the last equation we obtain
\[
\|\widetilde{u}_{j+1}\|_{H^1(D_1)^d}\preceq \|u_j\|_{H^1(D_0)^d}.
\]
Combining these inequalities we have
\[
\|u_{j+1}\|_{H^1(D)^d}\preceq \|u_j\|_{H^1(D_0)^d}.
\]
This concludes the proof.
\end{proof}

\begin{thm}
There is a constant $C>0$ such that for every $\eta>C$, the expansion (\ref{eq:13}) converges (absolutely) in $H^1(D)$. The asymptotic limit $u_0$ satisfies problem (\ref{eq:18new}) and $u_0$ can be computed using formula (\ref{eq:23}).
\end{thm}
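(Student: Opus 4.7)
The plan is to combine Lemmas \ref{lemmaconver1} and \ref{lemmaconver2} to derive a geometric bound of the form $\|u_j\|_{H^1(D)^d} \le C^{j-1}\|u_1\|_{H^1(D)^d}$ for all $j \ge 1$. Once this estimate is in hand, the series (\ref{eq:13}) is dominated termwise by a geometric series in $1/\eta$, so it converges absolutely in $H^1(D)^d$ as soon as $\eta$ exceeds the Korn/trace constant $C$ produced by Lemma \ref{lemmaconver2}.

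For the base step I would use Lemma \ref{lemmaconver1} together with Lemma \ref{mainlemma} applied to the rigid-body correction in (\ref{eq:27-1}) to obtain
\[
\|u_0\|_{H^1(D)^d} + \|u_1\|_{H^1(D)^d} \preceq \|f\|_{H^{-1}(D)^d}+\|g\|_{H^{1/2}(\partial D)^d}.
\]
For the inductive step, iterating Lemma \ref{lemmaconver2} yields $\|u_{j+1}\|_{H^1(D)^d} \le C\|u_j\|_{H^1(D_0)^d} \le C\|u_j\|_{H^1(D)^d}$, and by induction $\|u_j\|_{H^1(D)^d} \le C^{j-1}\|u_1\|_{H^1(D)^d}$ for every $j \ge 1$. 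For $\eta > C$ this gives
\[
\sum_{j=0}^\infty \eta^{-j}\|u_j\|_{H^1(D)^d} \le \|u_0\|_{H^1(D)^d} + \frac{\eta^{-1}\|u_1\|_{H^1(D)^d}}{1-C/\eta},
\]
so absolute convergence in the Banach space $H^1(D)^d$ follows at once.

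To identify the limit with $u_\eta$, I would let $S_N=\sum_{j=0}^N \eta^{-j}u_j$ and substitute $S_N$ into the variational identity (\ref{eq:12}). By the way the $u_j$'s were constructed in Subsection \ref{subsec:3.1}, all intermediate contributions cancel and only the single residual term $\eta^{-N}\mathcal{A}_{D_1}(u_{N+1},v)$ survives; the geometric bound together with continuity of $\mathcal{A}_{D_1}$ shows this residual tends to zero in the dual of $H_0^1(D)^d$ at rate $O((C/\eta)^N)$. Passing to the limit and invoking the well-posedness of (\ref{eq:12}) identifies the sum with $u_\eta$. The two remaining assertions about $u_0$, that it satisfies (\ref{eq:18new}) and admits the explicit formula (\ref{eq:23}), were established in Subsection \ref{subsec:3.1} and need no additional argument.

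The main obstacle I anticipate is the residual bookkeeping in the last step: the formal identity (\ref{eq:15new}) carries an awkward prefactor $\eta$ in front of $\mathcal{A}_{D_1}(u_0,v)$, so one must be careful that this term is exactly annihilated by (\ref{eq:16}) and that the telescoping indeed leaves just the tail piece $\eta^{-N}\mathcal{A}_{D_1}(u_{N+1},v)$; once that alignment is verified, the geometric factor $(C/\eta)^N$ dominates everything and the theorem follows.
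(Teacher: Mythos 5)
Your proposal follows essentially the same route as the paper: iterate Lemma \ref{lemmaconver2} to obtain the geometric bound $\|u_j\|_{H^1(D)^d}\preceq C^{j-1}\|u_1\|$, seed the recursion with Lemma \ref{lemmaconver1}, and sum the resulting geometric series for $\eta>C$. The one genuine addition is your final step identifying the sum of the series with $u_\eta$ by inserting the partial sums $S_N$ into (\ref{eq:12}) and showing the residual vanishes; the paper stops at absolute convergence and never verifies that the limit is actually the solution, so this telescoping argument (whose residual, note, is $\eta^{-N}\mathcal{A}_{D_0}(u_N,v)=-\eta^{-N}\mathcal{A}_{D_1}(u_{N+1},v)$ by (\ref{eq:31}), consistent with your claim up to sign) closes a gap the paper leaves implicit rather than duplicating its argument.
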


\begin{proof}
From the Lemma \ref{lemmaconver2} applied repeatedly $j-1$ times, we get that for every $j\geq 2$ there is a constant $C$ such that
\begin{eqnarray*}
\|u_j\|_{H^1(D)^d} & \leq & C\|u_{j-1}\|_{H^1(D_0)^d}\leq C\|u_{j-1}\|_{H^1(D)^d}\\
& \leq & \cdots \leq C^{j-1}\|\widetilde{u}_1\|_{H^1(D_0)^d}
\end{eqnarray*}
and then
\[
\left\Vert \sum_{j=2}^{\infty}\eta^{-j}u_j\right\Vert_{H^1(D)^d}\leq \frac{\|\widetilde{u}_1\|_{H^1(D_0)^d}}{C}\sum_{j=2}^{\infty}\left(\frac{C}{\eta}\right)^{j}.
\]
The last expansion converges when $\eta>C$. Using (\ref{eq:4.35}) and (\ref{eq:4.36}) we conclude that there is a constant $C_1$ such that
\[
\left\Vert \sum_{j=0}^{\infty}\eta^{-j}u_j\right\Vert_{H^1(D)^d}\preceq C_1\left(\|f\|_{H^{-1}(D)^d}+\|g\|_{H^{1/2}(\partial D)^d}\right)\sum_{j=0}^{\infty}\left(\frac{C}{\eta}\right)^j.
\]
Moreover, the asymptotic limit $u_0$ satisfies (\ref{eq:18new}).
\end{proof}
Combining Lemmas \ref{mainlemma} to \ref{lemmaconver2} we get convergence for the expansion (\ref{eq:13}) with the boundary condition (\ref{eq:14}).

\begin{cor}
There are positive constants $C$ and $C_1$ such that for every $\eta>C$, we have
\[
\left\Vert u-\sum_{j=0}^{J}\eta^{-j}u_j\right\Vert_{H^1(D)^d}\leq C_1\left(\|f\|_H^{-1}(D)^d+\|g\|_{H^{1/2}(D)^d}\right)\sum_{j=J+1}^{\infty}\left(\frac{C}{\eta}\right)^j,
\]
for $J\geq 0$.
\end{cor}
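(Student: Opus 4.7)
The plan is to reuse the estimates already established in the proof of the preceding theorem and apply them to the tail of the series rather than to the full sum. Since the theorem guarantees absolute convergence of $\sum_{j=0}^{\infty}\eta^{-j}u_j$ in $H^1(D)^d$ for $\eta>C$ and identifies this limit with $u=u_\eta$, I can write
\[
u-\sum_{j=0}^{J}\eta^{-j}u_j=\sum_{j=J+1}^{\infty}\eta^{-j}u_j,
\]
and then estimate this remainder term-by-term in $H^1(D)^d$.

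First I would iterate Lemma \ref{lemmaconver2}: for $j\geq 2$, applying it $j-1$ times yields
\[
\|u_j\|_{H^1(D)^d}\leq C\,\|u_{j-1}\|_{H^1(D_0)^d}\leq \cdots \leq C^{\,j-1}\|\widetilde{u}_1\|_{H^1(D_0)^d}.
\]
Second, I invoke Lemma \ref{lemmaconver1} (specifically \eqref{eq:4.36} together with \eqref{eq:4.37}) to bound
\[
\|\widetilde{u}_1\|_{H^1(D_0)^d}\preceq \|\widetilde{u}_1\|_{H^1(D_1)^d}\preceq \|f\|_{H^{-1}(D)^d}+\|g\|_{H^{1/2}(\partial D)^d},
\]
and I use \eqref{eq:4.35} for the $j=0$ and $j=1$ contributions so that the full series (not only the $j\geq 2$ tail) has a uniform prefactor $\|f\|_{H^{-1}(D)^d}+\|g\|_{H^{1/2}(\partial D)^d}$.

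Finally, applying the triangle inequality to the tail and combining the two bounds above gives
\[
\left\Vert \sum_{j=J+1}^{\infty}\eta^{-j}u_j\right\Vert_{H^1(D)^d}\leq \sum_{j=J+1}^{\infty}\eta^{-j}\|u_j\|_{H^1(D)^d}\leq C_1\bigl(\|f\|_{H^{-1}(D)^d}+\|g\|_{H^{1/2}(\partial D)^d}\bigr)\sum_{j=J+1}^{\infty}\left(\frac{C}{\eta}\right)^{j},
\]
after absorbing the $C^{-1}$ factor coming from the geometric iteration into the constant $C_1$. The condition $\eta>C$ ensures convergence of the tail geometric series.

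There is no genuine obstacle here; the corollary is a direct reorganization of the estimate proved inside the theorem, with the lower summation index shifted from $0$ to $J+1$. The only point requiring minor care is bookkeeping the constants so that a single $C_1$ works uniformly in $J$, and ensuring the bound on $\|u_0\|_{H^1(D)^d}$ and $\|u_1\|_{H^1(D)^d}$ (handled by Lemma \ref{lemmaconver1}) matches the same right-hand side as the tail from Lemma \ref{lemmaconver2}, so that the indexing $j\geq J+1$ remains valid for every $J\geq 0$.
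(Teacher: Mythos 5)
Your proposal is correct and follows essentially the same route as the paper: the paper offers no separate proof of the corollary beyond the remark that it follows by combining Lemmas \ref{mainlemma}--\ref{lemmaconver2}, and its proof of the preceding theorem contains exactly the iteration of Lemma \ref{lemmaconver2} and the appeal to Lemma \ref{lemmaconver1} that you apply to the tail $\sum_{j=J+1}^{\infty}\eta^{-j}u_j$. Your explicit identification of the series limit with $u_\eta$ and the constant bookkeeping are the only details the paper leaves implicit.
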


\begin{rmk}
The case of several inclusions can be analyzed in similar way and it is not presented here, a description on how to perform this analysis for a scalar problem are given in \citet{calo2014asymptotic}.
\end{rmk}

\section{Conclusions}
\label{sec4:Conclusions}

We use asymptotic expansions to study high-contrast linear elasticity problems. In particular, we explain the procedure to compute the terms of the asymptotic expansion for $u_\eta$ with one stiff inclusion in linear elastic medium. We detail the analysis of the asymptotic power series for one highly inelastic inclusion. 





%
%
%
\bibliographystyle{elsarticle-harv} 
\bibliography{PANBibAELE}

\end{document}